\documentclass{article}
\setlength{\textheight}{22cm} \headsep=15pt
\setlength{\textwidth}{15.5cm}
\setlength{\oddsidemargin}{0.5cm} \setlength{\topmargin}{-.5cm}
\setlength{\evensidemargin}{\oddsidemargin}

\usepackage{tikz}
\usetikzlibrary{calc}
\usetikzlibrary{matrix}
\usetikzlibrary{positioning}

\usepackage{mypack}
\usepackage{graphicx}
\usepackage{amsmath, amsthm, amssymb}

\def\ext2et{\Ext^2_{\Un(\bX)}}

\def\I{\A^{\times}}
\def\cG{\mathfrak{G}}

\def\ms{\medskip}

\def\Fx{F^{\times}}
\usepackage{mypack, color, sgame, sgamevar, tikz}
\title{Diophantine geometry and non-abelian reciprocity laws I}
\author{Minhyong Kim}
\begin{document}
\maketitle
\begin{abstract}

We use non-abelian fundamental groups to define a sequence of higher reciprocity maps on the adelic points of a variety over a number field satisfying certain conditions in Galois cohomology. The non-abelian reciprocity law then states that the global points are contained in the kernel of all the reciprocity maps.
\end{abstract}

\ms

\ms

{\em Dedicated to John Coates on the occasion of his 70th birthday.}
\ms

{\em 
I dive down into the depth of the ocean of forms,

hoping to gain the perfect pearl of the formless...}
\ms

{\em --Tagore}

\section{Refined Hasse principles and reciprocity laws}
 Consider the Hasse-Minkowski theorem \cite{serre} for  affine conics like
\be X: \ \ \ ax^2+by^2=c\ee
stating that $X$ has a rational point in a number field $F$ if and only if it has a point in $F_v$ for all places $v$. In spite of its great elegance, even undergraduate students are normally left with a somewhat unsatisfactory sense of the statement, having essentially to do with the fact that the theorem says nothing about the locus of
\be X(F)\subset X(\A_F).\ee
There are various attempts to rectify the situation, the most successful of which might be the theory of the Brauer-Manin obstruction \cite{manin}. 
\ms

The point of view of this paper is that one should consider such problems, even for more general varieties, as that of defining a good reciprocity map.
That is, let's simplify for a moment and assume $X\simeq \Gm$ (so that existence of a rational point is not the issue). Then we are just asking about the locus of
$\Fx$ in the ideles $\I_F$ of $F$. In this regard, a description of sorts is provided by
Abelian class field theory \cite{artin-tate}, which gives us a map
\be rec^{ab}:\I_F\rTo G^{ab}_F,\ee
with the property that \be rec^{ab}(\Fx)=0.\ee
So one could well view the reciprocity map as providing a `defining equation' for $\Gm(F)$ in $\Gm(\A_F)$, except for the unusual fact that the equation takes values in a group. Because $F$ is  a number field, there is also the usual complication that the kernel of $rec^{ab}$ is not exactly equal to $\Gm(F)$. But the interpretation of the reciprocity law as a refined statement of  Diophantine geometry
is reasonable enough.
\ms

In this paper, we obtain a generalization of Artin reciprocity to an {\em iterative non-abelian reciprocity law} with coefficients in smooth varieties whose \'etale fundamental groups satisfy rather mild cohomological conditions  [Coh], to be described near the end of this section. They are satisfied, for example, by any smooth curve. Given a smooth variety $X$ equipped with a rational point $b\in X(F)$ satisfying [Coh], we define a sequence of subsets
\be X(\A_F)=X(\A_F)_1\supset X(\A_F)_1^2\supset X(\A_F)_2 \supset X(\A_F)_2^3 \supset X(\A_F)_3\supset X(\A_F)_3^4\supset \cdots \ee
and a sequence of maps
\be rec_n:X(\A_F)_n\rTo \cG_n(X)\ee
\be rec^{n+1}_n:X(\A_F)^{n+1}_n\rTo \cG^{n+1}_n(X)\ee
to a sequence $\cG_n(X), \cG^{n+1}_n(X)$of profinite  abelian groups in such a way that
$$X(\A_F)_n^{n+1}=rec_n^{-1}(0)$$
and
\be X(\A_F)_{n+1}=(rec^{n+1}_n)^{-1}(0).\ee
We visualize this as a diagram:
\be
\bd
\cdots &X(\A_F)_2^3=rec_2^{-1}(0)& \subset & X(\A_F)_2&=(rec^2_{1})^{-1}(0) & \subset &X(\A_F)^2_{1}&=rec_1^{-1}(0)  &\subset &X(\A_F)_1 \\
\cdots&\dTo^{rec_2^3}& &\dTo^{rec_2 }& & & \dTo^{rec^2_{1}} & & &\dTo^{ rec_1 } &\\
\cdots &\cG^3_2(X)& & \cG_2(X)& & & \cG^2_{1}(X) & & & \cG_1(X) &
\ed\ee
in which each reciprocity map is defined not on all of $X(\A_F)$, but only on the kernel (the inverse image of 0) of all the previous maps. We put
\be X(\A_F)_{\infty}=\cap_{i=1}^{\infty} X(\A_F)_i.\ee
The non-abelian reciprocity law then states 
\begin{thm} 
\be X(F)\subset X(\A_F)_{\infty}.\ee

\end{thm}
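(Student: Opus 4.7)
The plan is to proceed by induction on $n$, proving simultaneously that every $x\in X(F)$ lies in $X(\A_F)_n$, satisfies $rec_n(x)=0$ (hence lies in $X(\A_F)_n^{n+1}$), and satisfies $rec_n^{n+1}(x)=0$ (hence lies in $X(\A_F)_{n+1}$). The base case $n=1$ should reduce to classical Artin reciprocity applied to the abelianization of the unipotent \'etale fundamental group: a global class in $H^1(G_F,-)$ restricts to a collection of local classes whose sum of Poitou--Tate invariants vanishes.

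The induction rests on the following picture, which is the natural framework suggested by the rest of the paper. The descending central series of $\Un(\bX)$ has abelian graded pieces $Z_n$, and each $rec_n$ is presumably the composition of a unipotent period map
\[
j_n\colon X(\A_F)_n\to\prod_v H^1\bigl(G_{F_v},\Un(\bX)_n\bigr),
\]
projection to $\prod_v H^1(G_{F_v},Z_n)$, and a Poitou--Tate pairing against a suitable Selmer-type subgroup of $H^1(G_F,Z_n^\vee(1))$; the abelian target $\cG_n(X)$ is the Pontryagin dual of that subgroup. The auxiliary maps $rec_n^{n+1}$ play the analogous role for the $H^2$-valued obstruction to lifting a $\Un(\bX)_n$-torsor to a $\Un(\bX)_{n+1}$-torsor, so that the vanishing of this obstruction is precisely what allows one to define $j_{n+1}$ and pass to the next level.

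For the inductive step, a rational point $x\in X(F)$ gives a \emph{global} \'etale torsor of paths from $b$ to $x$, hence a global class $c_x\in H^1(G_F,\Un(\bX)_n)$ whose restriction to $G_{F_v}$ for each $v$ recovers $j_n(x_v)$. The nine-term Poitou--Tate exact sequence applied to the abelian module $Z_n$ forces the image of any such globally induced class in $\prod_v H^1(G_{F_v},Z_n)$ to pair trivially with any class in $H^1(G_F,Z_n^\vee(1))$ that dies in every completion, so $rec_n(x)=0$. The $H^2$ version of the same Poitou--Tate sequence, applied to the obstruction class attached to lifting $c_x$ from level $n$ to level $n+1$, gives $rec_n^{n+1}(x)=0$. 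Intersecting over $n$ delivers $x\in X(\A_F)_\infty$.

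The main obstacle is establishing the compatibilities that let one pass from the non-abelian setting to the abelian Poitou--Tate framework at every level: one must check that $j_n$ really respects the central series filtration, that the obstruction class is a well-defined global class whose images at each place vanish whenever $x$ lies in $X(\A_F)_n^{n+1}$, and that the pairings defining $rec_n$ and $rec_n^{n+1}$ factor through the Shafarevich--Tate-type groups dual to the $\cG_n(X)$ and $\cG_n^{n+1}(X)$. Once these compatibilities are in place, the argument at each level is a formal consequence of global duality applied to a single abelian Galois module.
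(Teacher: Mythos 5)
Your proposal captures exactly the paper's argument: a rational point $x\in X(F)$ defines a global torsor of \'etale paths, giving a class in $\dirlim_T H^1(G_F^T,\Delta_n^M)=E(\Delta_n^M)$ whose localization is $j_n(x)$, so that $rec_n(x)=prec_n(j_n(x))=0$ by Proposition 2.4 and $rec_n^{n+1}(x)=\delta_n^g(j_n(x))=0$ since the global class lifts to level $n+1$; this is precisely the content of the commutative diagram at the end of Section 3. One caveat: the paper works with the profinite prime-to-$2$ fundamental group $\Delta=\pi_1(\bar X,b)^{(2)}$ and pairs with the full $H^1(G_F,D(T_n))$ via Poitou--Tate, not with a unipotent group or a Selmer-type subgroup as your sketch suggests, so the specific objects in your framework need to be replaced accordingly, though the logical structure is identical.
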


We give now a brief description of the groups $\cG_n$ and $\cG^{n+1}_n$. Let $\D=\pi_1(\bX, b)^{(2)}$ be the profinite  prime-to-2 \'etale fundamental group \cite{SGA1} of $\bX=X\times_{\Spec(F)}\Spec(\bar{F})$, and $\D^{[n]}$ be its lower central series defined as \be\D^{[1]}=\D\ee and
\be \D^{[n+1]}:=\overline{[\D, \D^{[n]}]},\ee
where the overline refers to the topological closure. We denote
\be \D_n:=\D/\D^{[n+1]}\ee
and
\be T_n:=\D^{[n]}/\D^{[n+1]}.\ee
Thus, we have an exact sequence
\be
1\rTo T_n \rTo \D_n\rTo \D_{n-1}\rTo 1\ee
for each $n$ turning $\D_n$ into a central extension of $\D_{n-1}$.

All of these objects are equipped with canonical actions of $G_F=\Gal(\bF/F)$. Given any topological abelian group $A$ with continuous $G_F$-action, we have the continuous Galois dual
\be D(N):=\Hom_{ct}(A, \mu_{\infty}),\ee
and the Pontriagin dual
\be A^{\vee}=\Hom_{ct}(A, \Q/\Z).\ee
(See Appendix II for details.)

With this notation, we can define the targets of the reciprocity maps $rec_n$ using continuous cohomology: 
\be \cG_n(X):=H^1(G_F, D(T_n))^{\vee}.\ee
Notice that when
$X=\Gm$, we have $T_1=\hZ^{(2)}(1)$ and $T_n=0$ for $n>1$. Thus, $D(T_1)=\oplus_{p\neq 2}\Q_p/\Z_p$ and
\be H^1(G_F, D(T_1))=\oplus_{p\neq 2} \Hom(G_F, \Q_p/\Z_p)=\oplus_{p\neq 2} \Hom(G_F^{ab}, \Q_p/\Z_p).\ee
Hence, by Pontrjagin duality, there is a canonical isomorphism
\be \cG_1((\Gm)_F)\simeq G^{ab, (2)}_F,\ee
and $rec_1$ will agree with the prime-to-2 part of the  usual reciprocity map $rec^{ab}$.

For the $\cG^{n+1}_n(X)$, we need a little more notation. 
Let $S$ be a finite set of places of $F$ and $G_F^S=\Gal(F_S/F)$ the Galois group of the maximal extension of $F$ unramified outside of $S$. We denote by $S^0$ the set of non-Archimedean places in $S$.
For a topological abelian group $A$ with $G_F^S$-action, we have the kernel of localization
\be \Sha^i_S(A):=\Ker[H^i(G^S_F, A)\rTo^{\loc_S}\prod_{v\in S^0} H^i(G_v, A)],\ee
and what we might call the {\em strict} kernel, 
\be s\Sha^i_S(A):=\Ker[H^i(G^S_F, A)\rTo^{\loc}\prod_{v\in V_F^0} H^i(G_v, A),\ee
where the localization map now goes to {\em all} non-Archimedean places in $F$. Obviously, $$s\Sha^i_S(A)\subset \Sha^i_S(A).$$ For the strict kernels, whenever $S\subset T$, the restriction maps on cohomology induce maps
\be s\Sha^i_S(A)\rTo s\Sha^i_T(A),\ee

For any finite set $M$ of odd primes, denote by $\D^M$ the maximal pro-$M$ quotient of $\D$, together with corresponding notation $[\D^{M}]^{[n]}$, $\D^M_n$, $T^M_n$. Given $M$, we consider sets of places $S$ of $F$ that contain all places lying above primes of $M$, all Archimedean places, and all places of bad reduction for $(X,b)$.

Then
\be \cG^{n+1}_n(X):=\invlim_M[\dirlim_S s\Sha^2_S(T^M_{n+1})].\ee

The conditions [Coh] are the following.
\ms

[Coh 1]: For each finite set $M$ of odd primes, $T^M_n$ is torsion-free.
\ms

[Coh 2]: For each finite set $M$ of odd primes and non-Archimedean place $v$, $H^0(G_v, T^M_n)=0$.
\ms

If we pick a place $v$, then the projection
$X(\A_F)\rTo X(F_v)$
induces the image filtration
\be X(F_v)=X(F_v)_1\supset X(F_v)_1^2\supset X(F_v)_2 \supset X(F_v)_2^3 \supset X(F_v)_3\supset X(F_v)_3^4\supset \cdots \ee
and of course, 
\be X(F)\subset X(F_v)_{\infty}:=\cap_n X(F_v)_n.\ee

\begin{conj}
When $X$ is a proper smooth curve and $v$ is an odd prime of good reduction, we have
$$X(F)=X(F_v)_{\infty}.$$
\end{conj}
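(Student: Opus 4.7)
The inclusion $X(F) \subset X(F_v)_\infty$ is immediate: the main theorem gives $X(F) \subset X(\A_F)_\infty$, and projection to the $v$-th component sends $X(\A_F)_n$ into $X(F_v)_n$ by construction. All the content of the conjecture lies in the reverse inclusion $X(F_v)_\infty \subset X(F)$.

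My strategy is to translate membership in $X(F_v)_\infty$ into an anabelian condition. By the construction of the tower, a point $x \in X(F_v)$ lies in $X(F_v)_n$ if and only if its \'etale Kummer class $\kappa_v(x) \in H^1(G_v, \D_n^M)$ is realized by a global class in $H^1(G_F^S, \D_n^M)$ satisfying the prescribed local conditions at the other places, and the passage $X(F_v)_n \supset X(F_v)_n^{n+1} \supset X(F_v)_{n+1}$ is controlled by the vanishing of the iterated lifting obstructions landing in the strict groups $s\Sha^2_S$ that define $\cG^{n+1}_n(X)$. The first step is thus to make this translation precise level by level --- using [Coh 1] and [Coh 2] to keep the obstruction calculus well-behaved --- and then to pass to the inverse limit over $n$, $M$, and $S$. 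The desired conclusion of this step is: $x \in X(F_v)_\infty$ precisely when the local section of the arithmetic fundamental group of $X$ determined by $x$ at $v$ is the restriction of a continuous global section $G_F \rTo \pi_1(X,b)^{(2)}$.

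Granting this reformulation, what remains is a purely anabelian statement: a section of the arithmetic fundamental group of a proper smooth curve which is geometric at a single odd place of good reduction must be globally geometric. For curves of genus $\geq 2$ this is a local-to-global sharpening of Grothendieck's section conjecture. Here the good-reduction hypothesis at $v$ should let one combine Stix-type results on sections over local fields with $p$-adic Hodge-theoretic comparisons between the profinite and crystalline realizations in order to reduce the problem to a form of the section conjecture that is closer to being tractable. Genus $0$ and genus $1$ are handled separately by Hasse--Minkowski and by the theory of Selmer groups of the Jacobian respectively.

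The anabelian step is the main obstacle, and I do not expect it to yield to techniques internal to the present framework. A more realistic near-term target would be to prove the weaker statement that $X(F_v)_\infty$ is \emph{finite}, by combining the cohomological description above with a non-abelian Chabauty-style $p$-adic dimension count at $v$ together with Faltings' theorem. Such a finiteness result would already reduce the verification of $X(F) = X(F_v)_\infty$ in any given example to a computation; the full conjecture, however, must wait on progress on the section conjecture for curves.
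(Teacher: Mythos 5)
The statement you are addressing is labelled a \emph{Conjecture} in the paper, and the paper offers no proof of it --- only the remark that the computations of \cite{BDKW} can be viewed as evidence for an affine analogue. So there is no ``paper's own proof'' to compare against, and anything you write here is a new attempt, not a reconstruction. Your plan correctly isolates the easy inclusion $X(F)\subset X(F_v)_\infty$ (which follows from the main theorem and the definition of the $X(F_v)_n$ as images of the $X(\A_F)_n$), and you are right that the hard inclusion $X(F_v)_\infty\subset X(F)$ resembles a local-to-global form of the section conjecture. You are also candid that this decisive anabelian step is open; given that, what you have written is a roadmap and a commentary, not a proof, and must not be presented as one.

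Even granting the section conjecture as a black box, the ``reformulation'' step is not as automatic as your sketch suggests. First, $X(F_v)_\infty=\cap_n X(F_v)_n$ is an \emph{intersection of images}, whereas what you need is a single adelic point of $X(\A_F)_\infty$ lying over $x_v$; passing from one to the other requires a compactness argument (properness of $X$, closedness of the sets $X(\A_F)_n$ in the adelic topology, and nonemptiness of the limit of a decreasing chain of compact fibres). Second, membership of $j_n(x)$ in $E_n=\invlim_M E(\D_n^M)$ for every $n$ gives a compatible system of global cohomology classes with possibly growing admissible sets $T=T(n,M)$; assembling these into a single continuous global section $G_F\to\pi_1(\bX,b)^{(2)}$ is again a limit argument that has to be carried out, not asserted. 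Third, the global realizability of the adelic class constrains the behaviour at \emph{all} finite places, not just at $v$, so the anabelian input you actually obtain is closer to an adelic or Selmer section than to ``a section that is geometric at one good place''; the Stix-type local results you invoke would then need to be applied at $v$ to a section that is only known to be globally realized, and the bridge between those two notions is exactly where the difficulty lies. Your proposed intermediate target --- finiteness of $X(F_v)_\infty$ via non-abelian Chabauty at $v$ together with Faltings --- is a sensible and more tractable goal, consistent with the evidence the paper itself cites, and would be the right thing to pursue before the full conjecture.
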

This conjecture can be viewed as a refinement of the conjecture of Birch and Swinnerton-Dyer type made in \cite{BDKW}. By comparing the profinite reciprocity map here to a unipotent analogue, the computations of that paper can be viewed as evidence for (an affine analogue of) this conjecture as well. We will write more systematically about this connection and about {\em explicit} higher reciprocity laws in a forthcoming publication.

\section{Pre-reciprocity}

We will assume throughout that $X$ is a smooth variety over $F$ such that the condition [Coh] are satisfied. We will denote by $V_F$ the set of all places of $F$ and by $V_F^0$ the set of non-Archimedean places.

The maps $rec_n$ and $rec_n^{n+1}$ will  be constructed in general via non-abelian cohomology and an iterative application of Poitou-Tate duality. For this, it is important that the $G_F$-action on any fixed
$\D^M$ factors through $G^S_F=\Gal(F_S/F)$ for some finite set $S$ of places of $F$. Here, $F_S$ refers to the maximal algebraic extension of $F$ unramified outside $S$. If $X\rInto X'$ is a smooth compactifictation with a normal crossing divisor $D$ as complement, then it suffices to take $S$ large enough to satisfy the conditions that

-- $X' $ has a smooth model over $\Spec(\O_F[1/S])$;

 -- $D$ extends to a relative normal crossing divisor over $\Spec(\O_F[1/S])$;
 
 -- $b$ extends to an $S$-integral point of the model of $X$, given as the complement of the closure of $D$ in the smooth model of $X'$.
 
 -- $S$ contains $M$ and all Archimedean places of $F$.
 
 (\cite{wojtkowiak}, Theorem 2.1)
 \ms

We will be using thereby the continuous cohomology  sets and groups (Appendix I, and \cite{kim, serre2})
\be H^1(G^S_F, \D^M_n),  \ \ H^i(G^S_F, T^M_n), \ \ H^i(G^S_F, D(T^M_n)).\ee
Whenever this notation is employed, we assume that the finite set $S$ has been chosen large enough so that the $G_F$-action factors through $G^S_F$. Given any topological group
$U$ with continuous action of $G_F$, if this action factors through $G_F^S$ for some set $S$, we will call $S$ an {\em admissible set} of places. For any admissible set, we denote by $S^0$ the non-Archimedean places in $S$. 
 \ms

For each non-Archimedean place $v$ of $F$, let $G_v=\Gal(\bar{F_v}/F).$
In the following, $U$ denotes a topologically finitely-generated profinite group that is prime-to-2, in the sense that it is the inverse limit of finite groups of order prime to 2.
When $U$ has a continuous  $G_F$-action, define
\be \prod' H^i(G_v, U)=\prod'_{v\in V_F^0} H^i(G_v, U)\ee
to be the restricted direct product with respect to the subsets
\be H^i_{un}(G_v, U):=Im[H^i(G_v/I_v, U^{I_v})\rTo H^i(G_v, U)],\ee
where $I_v\subset G_v$ is the inertia subgroup. The places $v$ are taken to be all the non-Archimedean ones. Here, as in the following, if $U$ is non-abelian, we will be considering only the index $i=1$. 
Note that $H^1_{un}(G_v, U)$ is exactly the kernel of the restriction map
\be H^1(G_v, U)\rTo H^1(I_v, U).\ee

We have
\be \prod' H^i(G_v, U)=\dirlim_S [\prod_{v\in S^0}H^i(G_v, U)\times \prod_{v\in V_F^0\setminus S^0} H^i_{un}(G_v, U)]\subset \prod_{v\in V_F^0}H^1(G_v, U)\ee
as $S$ runs over admissible sets of primes. We will denote by
$\prod^S H^i(G_v, U)$ each individual set
\be \prod_{v\in S^0}H^i(G_v, U)\times \prod_{v\in V_F^0\setminus S^0} H^i_{un}(G_v, U)\ee
occurring in the limit.
We will also use the notation
\be \prod_S H^i(G_v, U)=\prod_{v\in S^0} H^i(G_v, U)\ee
and for $T\supset S$,
\be \prod_T^S H^i(G_v, U)=\prod_{v\in S^0} H^i(G_v, U)\times \prod_{v\in T^0\setminus S^0} H^1_{un}(G_v, U), \ee
so that
\be \prod^S H^i(G_v, U)=\invlim_T\prod_T^S H^i(G_v, U).\ee

For each $n\geq 2$, we have an exact sequence
\be 1\rTo T^M_n\rTo \D^M_n\rTo \D^M_{n-1}\rTo 1.\ee
of topological groups. 
By Appendix I, Lemma 4.4, the  surjection $\D^M_n\rTo \D^M_{n-1}$ is equipped with a  continuous section, so that we get a long exact sequence of continuous cohomology
\be \bd 0&\rTo &H^1(G^S_F, T^M_n)& \rTo& H^1(G^S_F, \D^M_n) &\rTo &H^1(G^S_F, \D^M_{n-1})\\
& \rTo^{\d^g_{n-1}}& H^2(G^S_F, T^M_n).&  & & & \ed\ee
Here, the superscript in `$\d^g_{n-1}$' refers to `global'.
As explained in the Appendix I,  Lemma 4.2 and Lemma 4.3, the meaning of exactness here is as follows. The group $H^1(G^S_F, T^M_n)$ acts freely  on the space $H^1(G^S_F, \D^M_n)$ and the projection \be p_{n-1}: H^1(G_F^S, \D^M_n)\rTo H^1(G_F^S,\D^M_{n-1})\ee identifies the orbit space with the kernel of the boundary map $\d^g_{n-1}$.  To check that the conditions are satisfied, note that twisting the Galois action by a cocycle for a class $c\in H^1(G_F^S,\D^M_{n-1})$ will not change the action on the graded pieces $T^M_i$, so that the condition [Coh 2] implies that $\D^M_{n-1}$ has no $G_F^S$-invariants.

Similarly, for each non-Archimedean local Galois group, we have  exact sequences
\be\bd 0&\rTo &H^1(G_v, T^M_n)& \rTo& H^1(G_v, \D^M_n) &\rTo &H^1(G_v, \D^M_{n-1})\\
& \rTo^{\d_{n-1}}& H^2(G_v, T^M_n).&  & & & \ed\ee

 For each $n$, there is a surjection
 \be (T^M_1)^{\otimes n}\rOnto T^M_n.\ee
 Thus, $T^M_n$ has strictly negative weights between $-2n$ and $-n$ as a Galois representation. 
By \cite{jannsen}, Theorem 3(b), we see that the localization
\be
H^1(G^S_F, T^M_n)\rTo \prod^S
 H^1(G_v, T^M_n)\subset \prod'  H^1(G_v, T^M_n)
\ee
is injective.
\ms
To apply \cite{jannsen}, we need to make a few remarks. Firstly, there is the simple fact that
\be T^M_n=\prod_{l\in M} T^l_n,\ee
so it suffices to consider $l$-adic representations for a fixed prime $l$.  Next, we note that \cite{jannsen}  proves the injectivity for the Galois representations $H^i(\bar{V}, \Z_l(n))/(tor)$
and $i\neq 2n$ where $V$ is a smooth projective variety. But the proof only uses the fact that this is torsion-free, finitely-generated, and of non-zero weight.

Now, by using the exact sequences (2.11) and (2.13) and an induction over $n$, we get injectivity of localiztion
\be
H^1(G^S_F, \D^M_n)\rTo \prod^S
 H^1(G_v, \D^M_n)\subset \prod'  H^1(G_v, \D^M_n)
\ee
for every $n$.

Of course, we can repeat the discussion with any admissible $T\supset S$. Using these natural localization maps, we will regard global cohomology simply as subsets of the $\prod^S$ or of $\prod'$.

For any $U$ with continuous $G^S_F$-action such that the localization map \be
H^1(G^T_F, U)\rTo \prod^T
 H^1(G_v, U)\subset \prod'  H^1(G_v, U)
\ee
is injective for all admissible $T$, define
\be E(U):=\dirlim_T  \loc(H^1(G^T_F,U))=\cup_T \loc(H^1(G^T_F,U)).\ee

For admissible $T$, there is also the partial localization
\be H^1(G_F^T, U)\rTo^{\loc_T} \prod_{T}H^1(G_v, U)).\ee

When $U$ is topologically finitely-generated abelian pro-finite group with all fintie quotients prime to 2, we have the duality isomorphism (local Tate duality, \cite{NSW}, Chapter VII.2)
\be  D: \prod_{T} H^1(G_v, U)\simeq  \prod_{T} H^1(G_v, D(U))^{\vee}\ee
that can be composed with
\be  \prod_{T} H^1(G_v, D(U))^{\vee}\rTo^{\loc^*_T}H^1(G_F^T,D(U))^{\vee}\ee
to yield a map
\be \loc^*_T\circ D: \prod_{v\in T} H^1(G_v, U)\rTo H^1(G_F^T,D(U))^{\vee}\ee
such that 
\be\Ker( \loc^*_T\circ D)=\loc_T(H^1(G_F^T, U))\ee
(Poitou-Tate duality, \cite{NSW}, Chapter VIII.6).
We denote also by
$\loc^*_T\circ D$ the map
\be \prod^TH^1(G_v, U)\rTo H^1(G_F^T,D(U))^{\vee}\ee
obtained by projecting the components in $\prod_{v\in V_F^0\setminus T^0} H^1_{un}(G_v, U)$ to zero.

When $U$ is abelian and $T'\supset T$,  these maps  fit into commutative diagrams as follows:
\be \bd
 \prod^T H^1(G_v, U)  &\rInto & \prod^{T'}H^1(G_v, U)\\
 \dTo^{\loc_T^*\circ D} & & \dTo^{\loc_{T'}^*\circ D} \\
 H^1(G^T_F, D(U))^{\vee} & \lTo^{\mbox{Inf}^*} &  H^1(G^{T'}_F, D(U))^{\vee} \ed \ee
 where the lower arrow is the dual to inflation.
 The commutativity follows from the fact that $H^1_{un}(G_v,U)$ and $H^1_{un}(G_v, D(U))$ annihilate each other under duality.
  Hence,  we get a compatible family of maps
  \be 
  \prod^T H^1(G_v, U) \rTo \invlim_{T'} H^1(G^{T'}, D(U))^{\vee}=H^1(G_F, D(U))^{\vee}\ee
Taking the union over $T$, we then get
\be
 prec(U): \prod' H^1(G_v, U)\rTo \invlim_T H^1(G_{T}, D(U))^{\vee}=H^1(G_F, D(U))^{\vee}
\ee
(For `pre-reciprocity'.) 
According to Appendix II (5.13),
\begin{prop} When $U$ is topologically finitely-generated abelian pro-finite group with all fintie quotients prime to 2, then
\be \Ker(prec(U))=E(U).\ee
\end{prop}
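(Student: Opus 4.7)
The plan is to prove the two inclusions separately. The easy direction $E(U) \subseteq \Ker(prec(U))$ follows from Poitou-Tate and the commutative diagrams (2.29): if $\alpha = \loc(c)$ with $c \in H^1(G^T_F, U)$, then for every admissible $T' \supseteq T$ the exact sequence (2.24) at $T'$ gives $\loc^*_{T'} \circ D \circ \loc_{T'}(c) = 0$, and the diagrams show that these vanishing classes are a compatible family in the inverse system $\{H^1(G^{T'}_F, D(U))^{\vee}\}$ whose limit is $prec(\alpha)$.

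For the reverse inclusion, fix $\alpha \in \prod^T H^1(G_v, U)$ with $prec(\alpha) = 0$. For any finite set $V$ of non-Archimedean places disjoint from $T$, set $T' := T \cup V$ (enlarged to admissibility if necessary). Then the image of $\alpha$ in $H^1(G^{T'}_F, D(U))^{\vee}$ vanishes, so Poitou-Tate yields $c_{T'} \in H^1(G^{T'}_F, U)$ with $\loc_{T'}(c_{T'}) = \bar\alpha_{T'}$, the projection of $\alpha$ to $\prod_{T'} H^1(G_v, U)$. In particular $\loc_v(c_{T'}) = \alpha_v \in H^1_{un}(G_v, U)$ for each $v \in V$, so the cocycle $c_{T'}$ restricts trivially to every inertia group $I_v$ with $v \in V$. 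These inertia subgroups topologically generate $\Gal(F_{T'}/F_T)$, and $\Gal(F_{T'}/F_T)$ acts trivially on $U$ since the $G_F$-action factors through $G^T_F$; so the inflation-restriction sequence for $G^{T'}_F \twoheadrightarrow G^T_F$ descends $c_{T'}$ to a unique $c_V \in H^1(G^T_F, U)$ satisfying $\loc_v(c_V) = \alpha_v$ for all $v \in T \cup V$.

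The conclusion comes from a compactness argument. The sets $C_V := \{c \in H^1(G^T_F, U) : \loc_v(c) = \alpha_v \text{ for all } v \in T \cup V\}$, indexed by finite $V \subseteq V_F^0 \setminus T^0$, are nonempty by the previous step, closed in the profinite (hence compact) group $H^1(G^T_F, U)$, and form a decreasing family under enlargement of $V$. The finite intersection property gives an element $c \in \bigcap_V C_V$, which satisfies $\loc_v(c) = \alpha_v$ at every non-Archimedean place; since $c \in H^1(G^T_F, U)$ is automatically unramified outside $T$, this means $\loc(c) = \alpha$ in $\prod^T H^1(G_v, U)$, so $\alpha \in E(U)$. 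The main obstacle will be the descent step: carefully justifying that the inertia subgroups at $V$ topologically generate $\Gal(F_{T'}/F_T)$ and that the relevant module of invariants is what one expects, so that the inflation-restriction sequence has the desired shape. Both points are standard but deserve explicit verification; once they are in place, the rest of the argument is essentially a packaging of classical Poitou-Tate together with a limit step.
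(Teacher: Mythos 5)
Your proof is correct and follows essentially the same route the paper encodes in Appendix~II (5.11)--(5.13): your inflation--restriction descent (using that the inertia groups at $T'\setminus T$ normally generate $\Gal(F_{T'}/F_T)$ and act trivially on $U$) is exactly the content of identity (5.11), and your finite-intersection/compactness step is the inverse-limit argument used to pass from (5.12) to (5.13). The paper cites (5.13) directly to prove the proposition, so you have simply unfolded that exact sequence into its constituent Poitou--Tate, descent, and limit steps.
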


One distinction from the appendix is that our product runs only over non-Archimedean places. However, because we are only considering prime-to-2 coefficients, the local $H^1$ vanishes as all Archimedean places.
The goal of this section, by and large, is to generalise this result to the coeffiecients $\D^M_n$, which are non-abelian.

In addition to the exact sequences (2.19) and (2.21), we have  exact sequences with  restricted direct products
\be \bd 0&\rTo &\prod' H^1(G_v, T^M_n)& \rTo& \prod'  H^1(G_v, \D^M_n) &\rTo^{p_{n-1}} & \prod'  H^1(G_v, \D^M_{n-1})\\
& \rTo^{\d_{n-1}}& \prod'  H^2(G_v, T^M_n)&  & & & \ed\ee
making the middle term of the first line a $\prod' H^1(G_v, T^M_n)$-torsor over the kernel of $\d$. To see this, let $S$ be an admissible set of primes. Then the $G_v$-action for $v\notin S$ factors through $G_v/I_v$, so that we have an exact sequence
\be \bd 
 0&\rTo &H^1(G_v/I_v, T^M_n)& \rTo& H^1(G_v/I_v, \D^M_n) &\rTo &H^1(G_v/I_v, \D^M_{n-1})\\
& \rTo^{\d_{n-1}}& H^2(G_v/I_v, T^M_n)&  & & & \ed\ee
and hence, an exact sequence
\be \bd 
 0\rTo &\prod_{v\in S^0}H^1(G_v, T^M_n)\times  \prod_{v\in V_F^0\setminus S^0} H^1_{un}(G_v/I_v, T^M_n)& \rTo& \prod_{v\in S}H^1(G_v, \D^M_n)\times  \prod_{v\in V_F^0\setminus S^0}  H^1_{un}(G_v/I_v, \D^M_n)\\
 \rTo &\prod_{v\in S}H^1(G_v, \D^M_{n-1})\times  \prod_{v\in V_F^0\setminus S^0}  H^1_{un}(G_v/I_v, \D^M_{n-1}) & \rTo^{\d_{n-1}}& \prod_{v\in S}H^2(G_v, T^M_n)\times  \prod_{v\in V_F^0\setminus S^0}  H^2_{un}(G_v/I_v, T^M_n)   \ed\ee
Taking the direct limit over $S$ gives us the exact sequence with restricted direct products.

\ms

In the following, various local, global, and product boundary maps will occur. In the notation, we will just distinguish the level and the global boundary map, since the domain should be mostly clear from the context.

We go on to define a sequence of pre-reciprocity maps as follows.
First, we let
\be prec_1 :=\invlim_M prec(\D^M_1): \invlim_M \prod' H^1(G_v, \D^M_1)\rTo \invlim_M H^1(G_F, D(\D^M_1))^{\vee}=H^1(G_F, D(\D_1))^{\vee}\ee
as above. The kernel of $prec_1$ is exactly $E_1:=\invlim_M E(\D_1^M).$
For $x\in  E_1$, define
\be prec_1^2(x):=\d^g_1(x)\in \invlim_M \dirlim_T H^2(G^T_F, T^M_2)\ee
and \be E_1^2:=\Ker(prec_1^2).\ee
Given  $x\in E_1^2$ we will denote by $x_M$ the projection to \be [E_1^2]_M:=\Ker [\d^g_1| E(\D^M_1)].\ee
We will be considering various inverse limits over $M$ below, and using subscripts $M$ in a consistent fashion.

Now define
\be W(\D^M_2)\subset \prod' H^1(G_v, \D^M_2)\ee
to be the inverse image of $[E_1^2]_M$ under the projection map
\be  p_1: \prod' H^1(G_v, \D^M_2)\rTo \prod' H^1(G_v, \D^M_1), \ee  which is, therefore, a
$\prod' H^1(G_v, T^M_2)$-torsor over $[E_1^2]_M$ . 

Consider the following diagram:

\be \bd
E(T^M_2) & \rInto & \prod' H^1(G_v, T^M_2) \\
\dInto & & \dInto \\
E( \D^M_2) & \rInto & W(\D^M_2) \\
\dTo & & \dTo \\
[E_1^2]_M& = & [E_1^2]_M
\ed\ee
We see with this that $E(\D^M_2)$ provides a reduction of structure group for $W(\D^M_2)$ from
$\prod' H^1(G_v, T^M_2)$ to $E(T^M_2)$.
\ms

Choose a set-theoretic splitting
\be s_1: [E_1^2]_M \rTo E(\D^M_2)\ee
of the torsor in the left column. We then use this `global' splitting to define
\be prec_2^M: W(\D^M_2)\rTo H^1(G_F, D(T^M_2))^{\vee}\ee
by the formula
\be prec^M_2(x)= prec(T^M_2)(x-s_1(p_1(x)))\ee
Here, we denote by $x-s_1(p_1(x))$ the unique element $z\in \prod' H^1(G_v, T^M_2)$ such that
$x=s_1(p_1(x))+z.$

Because $E(T^M_2)$ is killed by $prec(T^M_2)$, it is easy to see that
\begin{prop}
$prec_2^M$ is independent of the splitting $s_1$.
\end{prop}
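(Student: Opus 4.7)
The plan is to show that any two splittings $s_1, s_1' : [E_1^2]_M \to E(\D^M_2)$ of the torsor differ, pointwise, by an element of $E(T^M_2)$, and then to appeal to the earlier proposition identifying $E(T^M_2)$ with $\Ker(prec(T^M_2))$.

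First I would fix $x \in W(\D^M_2)$ and set $c := p_1(x) \in [E_1^2]_M$. The vertical map $E(\D^M_2) \to [E_1^2]_M$ in the diagram preceding the definition of $prec_2^M$ is a torsor under $E(T^M_2)$, since $E(\D^M_2)$ sits inside $W(\D^M_2)$ as the reduction of structure group from $\prod' H^1(G_v, T^M_2)$ to $E(T^M_2)$. Hence both $s_1(c)$ and $s_1'(c)$ lie in the same fiber of $E(\D^M_2) \to [E_1^2]_M$, so there exists a unique $e_c \in E(T^M_2)$ with $s_1'(c) = s_1(c) + e_c$. Viewing everything inside the abelian group $\prod' H^1(G_v, T^M_2)$ via the torsor action, we then have
\be
x - s_1'(c) = \bigl(x - s_1(c)\bigr) - e_c.
\ee

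Next, I would compute using the additivity of $prec(T^M_2)$ (it is a group homomorphism on the abelian group $\prod' H^1(G_v, T^M_2)$, being built from the local Tate duality pairing and the dual of localization):
\be
prec(T^M_2)\bigl(x - s_1'(c)\bigr) = prec(T^M_2)\bigl(x - s_1(c)\bigr) - prec(T^M_2)(e_c).
\ee
By the proposition stated just above the definition of $prec_2^M$, we have $\Ker(prec(T^M_2)) = E(T^M_2)$, and so $prec(T^M_2)(e_c) = 0$. This gives $prec(T^M_2)(x - s_1'(c)) = prec(T^M_2)(x - s_1(c))$, which is exactly the independence of $prec_2^M$ from the choice of splitting.

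The only thing to double-check is the first step: namely, that the fibers of $E(\D^M_2) \to [E_1^2]_M$ are indeed principal homogeneous spaces under $E(T^M_2)$. This reduces to verifying that for any $c \in [E_1^2]_M$, any two global lifts in $E(\D^M_2)$ of $c$ differ by a global class in $H^1(G_F^T, T^M_2)$ for a suitably large $T$, which follows from the exactness of the sequence (2.11) combined with the injectivity of the localization maps established earlier in the section. This is the only subtle point; once it is in hand the rest of the argument is a one-line computation using additivity.
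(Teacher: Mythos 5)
Your proof is correct and is exactly the argument the paper has in mind; the paper merely states, as a one-line remark, ``Because $E(T^M_2)$ is killed by $prec(T^M_2)$, it is easy to see that \dots,'' and your write-up fills in precisely the two steps implicit there: that any two splittings differ pointwise by an element of $E(T^M_2)$ (using that $E(\D^M_2)\to [E_1^2]_M$ is an $E(T^M_2)$-torsor), and that $prec(T^M_2)$ is additive and kills $E(T^M_2)$.
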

Now define
\be W_2:=\invlim_M  W_2(\D_2^M)\ee
and
\be prec_2:=\invlim_M prec_2^M: W_2 \rTo \invlim_M H^1(G_F, D(T^M_2))^{\vee}=H^1(G_F, D(T_2))^{\vee}.\ee

In general, define \be E_n:=\invlim_M E(\D_n^M)\ee
and
\be prec_n^{n+1}:=\d^g_n:E_n \rTo \invlim_M\dirlim_TH^2(G_T, T^M_{n+1}).\ee
Then define
\be E^{n+1}_n=\Ker(\d^g_n)\subset E_n,\ee
and
\be W(\D^M_{n+1})=p_n^{-1}([E^{n+1}_n]_M),\ee
where $[E^{n+1}_n]_M=\Ker(\d^g|E(\D^M_n))$.
Use a splitting $s_{n}$ of 
\be \bd E(\D^M_{n+1})&\rTo&
[E_n^{n+1}]_M\ed\ee to define
\be prec^M_{n+1}: W(\D^M_{n+1})\rTo H^1(G_F, D(T^M_{n+1}))^{\vee}\ee
via the formula
\be prec^M_{n+1}(x)= prec(T^M_{n+1})(x-s_{n}(p_{n}(x))).\ee
Once again, because $E(T^M_{n+1})$ is killed by $prec(T^M_{n+1})$, we get \begin{prop}
$prec_n^M$ is independent of the splitting $s_{n}$.

\end{prop}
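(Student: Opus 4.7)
The plan is to run exactly the same argument that proved independence for $prec_2^M$: changing the splitting only perturbs the argument of $prec(T^M_{n+1})$ by an element of the subgroup $E(T^M_{n+1})$, and this subgroup is precisely the kernel of $prec(T^M_{n+1})$ by the earlier proposition derived from Appendix II.

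In more detail, I would first record the torsor-theoretic structure underlying the construction. The analogue at level $n+1$ of the diagram displayed for $\D^M_2$ exhibits $E(T^M_{n+1}) \subset \prod' H^1(G_v, T^M_{n+1})$ as a reduction of structure group: the fibration $E(\D^M_{n+1}) \to [E_n^{n+1}]_M$ is an $E(T^M_{n+1})$-subtorsor of the $\prod' H^1(G_v, T^M_{n+1})$-torsor $W(\D^M_{n+1}) \to [E_n^{n+1}]_M$. Hence for any two set-theoretic splittings $s_n, s_n'$ of the left column, and any $y \in [E_n^{n+1}]_M$, the difference $s_n'(y) - s_n(y)$, computed in the abelian group $\prod' H^1(G_v, T^M_{n+1})$ via the torsor action, actually lies in the subgroup $E(T^M_{n+1})$.

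Next, for any $x \in W(\D^M_{n+1})$ I would compute, inside $\prod' H^1(G_v, T^M_{n+1})$,
\[
\bigl(x - s_n(p_n(x))\bigr) - \bigl(x - s_n'(p_n(x))\bigr) \;=\; s_n'(p_n(x)) - s_n(p_n(x)) \;\in\; E(T^M_{n+1}).
\]
Applying the group homomorphism $prec(T^M_{n+1})$ and invoking the earlier Proposition, which asserts $\Ker(prec(T^M_{n+1})) = E(T^M_{n+1})$, the right-hand side is annihilated, so
\[
prec(T^M_{n+1})\bigl(x - s_n(p_n(x))\bigr) \;=\; prec(T^M_{n+1})\bigl(x - s_n'(p_n(x))\bigr),
\]
which is precisely the equality of the two candidate values of $prec^M_{n+1}(x)$ defined via $s_n$ and via $s_n'$.

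The only point requiring care is the bookkeeping around the torsor action, specifically confirming that the formal subtraction $x - s_n(p_n(x))$ behaves additively when one replaces $s_n$ by $s_n'$. Since both the ambient structure on $W(\D^M_{n+1})$ and the sub-structure on $E(\D^M_{n+1})$ are actions by the same abelian group operation, and $prec(T^M_{n+1})$ is an honest homomorphism out of $\prod' H^1(G_v, T^M_{n+1})$, there is no genuine obstacle; the argument is structurally identical to the base case $n=1$ stated above, with the earlier Proposition playing the essential role in both cases.
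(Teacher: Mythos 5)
Your argument is correct and is precisely what the paper is gesturing at: the paper omits the explicit computation, remarking only that the claim follows because $E(T^M_{n+1})$ is killed by $prec(T^M_{n+1})$ (Proposition 2.2). You have spelled out that remark — two splittings differ fiberwise by an element of the subtorsor group $E(T^M_{n+1})$, which is exactly $\Ker(prec(T^M_{n+1}))$ — so this is the same proof, fully expanded.
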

Finally,  define
\be W_{n+1}:=\invlim_M W(\D^M_{n+1})\ee
and
\be prec_{n+1}=\invlim_M prec_{n+1}^M: W_{n+1}\rTo \invlim_M H^1(G_F, D(T^M_{n+1}))^{\vee}=
H^1(G_F, D(T_{n+1}))^{\vee}.\ee

Then we finally have the following generalisation of Proposition 2.2.
\begin{prop}
\be Ker(prec_{n+1})=E_{n+1}\ee
\end{prop}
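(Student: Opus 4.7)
The plan is to reduce the non-abelian claim at level $n+1$ to the abelian Proposition 2.2 applied to the coefficient module $T^M_{n+1}$, exploiting the defining formula $prec^M_{n+1}(x) = prec(T^M_{n+1})(x - s_n(p_n(x)))$ together with the torsor exact sequence (2.11). Since both source and target of $prec_{n+1}$ are assembled as inverse limits over $M$, and $E_{n+1} = \invlim_M E(\D^M_{n+1})$ by definition, I will fix $M$ and establish the componentwise identity $\Ker(prec_{n+1}^M) = E(\D^M_{n+1})$, after which the global equality follows by passing to the inverse limit. Note that $E(\D^M_{n+1}) \subset W(\D^M_{n+1})$, because the composite $\d^g_n \circ p_n$ vanishes globally by exactness of (2.11).

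For the forward inclusion, assume $x_M \in W(\D^M_{n+1})$ satisfies $prec_{n+1}^M(x_M) = 0$. By definition this means $prec(T^M_{n+1})$ annihilates the local class $t_M := x_M - s_n(p_n(x_M))$, so Proposition 2.2 gives $t_M \in E(T^M_{n+1})$. Since the splitting $s_n$ takes values in $E(\D^M_{n+1})$, there is an admissible $T$ and classes $\tilde z \in H^1(G^T_F, \D^M_{n+1})$ and $\tilde t \in H^1(G^T_F, T^M_{n+1})$ (after possibly enlarging $T$) with $\loc(\tilde z) = s_n(p_n(x_M))$ and $\loc(\tilde t) = t_M$. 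The global torsor action from (2.11) produces $\tilde z + \tilde t \in H^1(G^T_F, \D^M_{n+1})$, whose localization is $s_n(p_n(x_M)) + t_M = x_M$ by naturality of the action. Hence $x_M \in E(\D^M_{n+1})$.

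The reverse inclusion runs backward along the same logic but requires the injectivity of $H^1(G^T_F, \D^M_n) \to \prod' H^1(G_v, \D^M_n)$ proved just after (2.22). Given $x_M \in E(\D^M_{n+1})$, I lift $x_M$ and $s_n(p_n(x_M))$ to global classes $\tilde x, \tilde z \in H^1(G^T_F, \D^M_{n+1})$ for a sufficiently large admissible $T$. Their $p_n$-images localize to the same element $p_n(x_M)$, so by the cited injectivity they coincide globally under $p_n$. The torsor exact sequence then furnishes a unique $\tilde t \in H^1(G^T_F, T^M_{n+1})$ with $\tilde x = \tilde z + \tilde t$, and its localization is precisely $t_M = x_M - s_n(p_n(x_M))$. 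Thus $t_M \in E(T^M_{n+1})$, and Proposition 2.2 returns $prec(T^M_{n+1})(t_M) = 0$, which is exactly $prec_{n+1}^M(x_M) = 0$.

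The main obstacle I expect is not any single step but the careful bookkeeping required to track the torsor structure across (i) global-to-local localization, (ii) the direct limit over admissible $T$ used in the definition of $E$, and (iii) the inverse limit over finite prime sets $M$. The reverse direction in particular leans essentially on the injectivity of global-to-local localization for $\D^M_n$, itself deduced from the strictly negative weights of $T^M_n$ via Jannsen's theorem; without it, two global classes with matching local $p_n$-images need not agree globally, and one could not extract a single $\tilde t$ to witness $t_M \in E(T^M_{n+1})$.
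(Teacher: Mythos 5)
Your proposal is correct and follows the same strategy as the paper: reduce to Proposition 2.2 via the defining formula $prec^M_{n+1}(x) = prec(T^M_{n+1})(x - s_n(p_n(x)))$ and the torsor structure of the exact sequence (2.11), handling each $M$-component and then passing to the inverse limit. You usefully make explicit what the paper dismisses as ``clear from the definition'' for the inclusion $E(\D^M_{n+1}) \subset \Ker(prec^M_{n+1})$ --- namely, that it rests on the injectivity of global-to-local localization (Jannsen), which is exactly what makes $E(\D^M_{n+1})$ an $E(T^M_{n+1})$-torsor over $[E_n^{n+1}]_M$, the ``reduction of structure group'' asserted around diagram (2.35).
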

\begin{proof}
We have seen this already for $n=1$. 
Let $x\in \Ker(prec_{n+1})$ and $x_M$ the projection to $\Ker(prec_{n+1}^M)$. 
It is clear from the definition that $E(\D^M_{n+1})\subset Ker(prec^M_{n+1})$.
On the other hand, if $prec^M_{n+1}(x)=0$, then $y_M=x_M-s_{n}(p_{n}(x))\in E(T^M_{n+1}),$ by Proposition 2.2. Hence,
$x_M=y_M+s_{n}(p_{n}(x))\in E(\D^M_{n+1}).$ Since this is true for all $M$, $x\in E_{n+1}=\invlim E(\D_{n+1}^M)$.

\end{proof}

\section{Reciprocity}
Recall the product of the local period  maps
\be j^M_n: X(\A_F)\rTo \prod' H^1(G_v, \D^M_n),\ee
\be x\mapsto (\pi_1^{et}(\bX; b, x_v)^M_n)_v.\ee
Here,$$\pi_1^{et}(\bX; b, x_v)^M_n:=\pi_1^{et}(\bX; b, x_v)\times_{\pi_1^{et}(\bX,b)} \D^M_n=[\pi_1^{et}(\bX; b, x_v)\times \D^M_n]/\pi_1^{et}(\bX,b) $$
 are torsors for $\D^M_n$ with compatible actions of $G_v$, and hence, define classes in $H^1(G_v, \D^M_n)$. When $v\notin S$ for $S$ admissible and
$x_v\in X(\O_{F_v})$, then this class belongs to $H^1_{un}(G_v, \D^M_n)$ ( \cite{wojtkowiak}, Prop. 2.3). Therefore,
$ (\pi_1^{et}(\bX; b, x_v)^M_n)_v$ defines a class in $\prod'H^1(G_v, \D^M_n)$.
 (This discussion is exactly parallel to the unipotent case \cite{kim2, KT}.)
 Clearly, we can then take the limit over $M$, to get the period map
 \be j_n: X(\A_F)\rTo \invlim_M\prod' H^1(G_v, \D^M_n).\ee
 \ms
 
The reciprocity maps will be  defined by\be
rec_n(x)=prec_n(j_n(x)),\ee  
and
\be rec_n^{n+1}(x)=prec^{n+1}_n(j_n(x)).\ee
Of course, these maps will not be defined on all of $X(\A_F)$. As in the introduction, define
\be X(\A_F)_1^2=Ker (rec_1).\ee
Then for $x\in X(\A_F)_1^2$, $j_1(x)\in E(\D_1)$, and hence, $prec_1^2$ is defined on $j_1(x)$. Thus,
$rec_1^2$ is defined on $X(\A_F)_1^2$.  Now define
\be X(\A_F)_2:=\Ker(rec_1^2).\ee
Then for $x\in X(\A_F)_2$, $j_1(x)\in E^2_1$, so that $j_2(x)\in W_2$.
Hence, $prec_2$ is defined on $j_2(x)$, and $rec_2$ is defined on $X(\A_F)_2$.

In general, the following proposition is now clear.
\begin{prop} Assume we have defined \be X(\A_F)_1^2\supset X(\A_F)_{2}\supset \cdots \supset X(\A_F)_{n-1}^{n}\supset X(\A_F)_n\ee as the iterative kernels of
$rec_1, rec^2_1, \cdots,  rec_{n-1}, rec_{n-1}^n$. Then, $j_n(x)\in W_n$, so that
$rec_n=prec_n\circ j_n$ is defined on $X(\A_F)_{n}$ and $rec_n^{n+1}$ is defined on $\Ker (rec_n)$.
\end{prop}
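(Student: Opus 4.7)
The plan is to induct on $n$, with the cases $n=1$ and $n=2$ already disposed of in the paragraphs immediately preceding the statement. The inductive step decomposes into two assertions: (i) for $x \in X(\A_F)_n$, the element $j_n(x)$ lies in $W_n$, so that $rec_n(x) = prec_n(j_n(x))$ is well-defined; and (ii) for $x \in \Ker(rec_n)$, the element $j_n(x)$ lies in $E_n$, so that $rec_n^{n+1}(x) = prec_n^{n+1}(j_n(x)) = \d^g_n(j_n(x))$ is well-defined.

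For (i), the essential tool is the naturality of the étale period map under the central extensions $\D^M_n \twoheadrightarrow \D^M_{n-1}$. This projection induces compatible maps $p_{n-1}$ on each local $H^1(G_v,-)$, on the restricted products $\prod' H^1(G_v,-)$, and hence on their inverse limits over $M$; by functoriality of $\pi_1^{et}(\bX;b,x_v)$ one has $p_{n-1} \circ j_n = j_{n-1}$. Now, by the inductive hypothesis applied to $x \in X(\A_F)_n \subset X(\A_F)_{n-1}$, we know $j_{n-1}(x) \in W_{n-1}$ and both $rec_{n-1}(x) = 0$ and $rec_{n-1}^n(x) = 0$. Applying the final proposition of Section 2 to the vanishing of $prec_{n-1}(j_{n-1}(x))$ gives $j_{n-1}(x) \in E_{n-1}$, and the vanishing of $\d^g_{n-1}(j_{n-1}(x))$ then upgrades this to $j_{n-1}(x) \in E^n_{n-1}$. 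Projecting to each $M$-component places $j_{n-1}(x)_M$ in $[E^n_{n-1}]_M$, so by the very definition $W(\D^M_n) = p_{n-1}^{-1}([E^n_{n-1}]_M)$ we conclude $j_n(x)_M \in W(\D^M_n)$ for every $M$. Passing to the inverse limit yields $j_n(x) \in W_n$, so $prec_n$ is defined on $j_n(x)$.

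For (ii), suppose $rec_n(x) = prec_n(j_n(x)) = 0$. Applying the final proposition of Section 2 at level $n$ gives $j_n(x) \in E_n = \invlim_M E(\D^M_n)$, and this is precisely the locus on which the global boundary map $\d^g_n = prec_n^{n+1}$ has been defined. Hence $rec_n^{n+1}(x)$ makes sense, completing the inductive step.

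The step requiring the most care is the compatibility across the parameter $M$: one must check that the inverse-limit definitions of $W_n$, $E_n$, and $E^{n+1}_n$ really do match the pointwise $M$-level conditions, and that $\Ker(prec_n^M)$ assembles, under the transition maps, into $\Ker(prec_n)$ on the full inverse limit. This is essentially formal, since the transition maps in $M$ are induced by the surjections $\D^{M'}_n \twoheadrightarrow \D^M_n$ (for $M \subset M'$) which are equivariant for all the structure in sight, but it is the place where one must be disciplined about bookkeeping in order for the induction to close cleanly.
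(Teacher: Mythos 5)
Your proposal is correct and unpacks exactly what the paper leaves implicit: the paper simply declares the proposition ``now clear'' after working out the $n=1,2$ cases, and your explicit induction --- using $p_{n-1}\circ j_n = j_{n-1}$, the characterization $\Ker(prec_n) = E_n$ from the final proposition of Section~2, and the definition $W(\D^M_n) = p_{n-1}^{-1}([E^n_{n-1}]_M)$ --- is precisely that argument carried through at level $n$. Your closing caveat about the $M$-bookkeeping is a fair observation, though as you note it is formal given the equivariance of the surjections $\D^{M'}_n \twoheadrightarrow \D^M_n$ for $M\subset M'$.
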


Note that $prec^n_{n-1}$ takes values in $\invlim_M \dirlim_TH^2(G^T_F, T^M_n).$
However, $j_{n-1}(x)$ lifts to $j_n(x)$, and hence, is clearly in the kernel of
$\d_n$. Therefore, 
\be prec^n_{n-1}(j_{n-1}(x))\in \invlim_M\dirlim_T s\Sha^2_T(T^M_n)=:\cG^n_{n-1}(X)\ee
for all $x\in X(\A_F)_{n-1}$.

The global reciprocity law now follows immediately from the commutativity of the diagram
\be \bd
X(F)& \rInto & X(\A_F) \\
\dTo & & \dTo \\
E(\D^M_n)=\dirlim_T H^1(G_F^T, \D^M_n) & \rInto & \prod' H^1(G_v, \D^M_n)
\ed\ee
for each $M$.

\ms

To check compatibility with the usual reciprocity map for $X=\Gm$ note  that the map
\be F_v^*\rTo^{\k} H^1(G_v, \hZ(1)^{(2)})\rTo^{D} H^1(G_v, \oplus_{p\neq 2} \Q_p/\Z_p)^{\vee}=G_v^{ab, (2)}\ee
is the local reciprocity map (\cite{NSW}, corollary (7.2.13), with the natural modification for the prime-to-2 part). Here, $\k$ is the map given by Kummer theory, while $D$ is local duality as before. Furthermore, the localization
\be H^1(G_F, \oplus_{p\neq 2} \Q_p/\Z_p)\rTo^{\loc_v} H^1(G_v, \oplus_{p\neq 2} \Q_p/\Z_p)\ee
is dual to the map,
\be G^{(2)}_v\rTo G^{(2)}_F\ee
induced by $\bF\rInto \bF_v$,
so that the dual of localization
\be G_v^{ab,(2)}=H^1(G_v, \oplus_{p\neq 2} \Q_p/\Z_p)^{\vee}\rTo^{\loc_v^*}H^1(G_F, \oplus_{p\neq 2} \Q_p/\Z_p)^{\vee}=G^{ab,(2)}_F\ee
is simply the natural map we started out with. Since the global reciprocity map is the sum of local reciprocity maps followed by the inclusion of decomposition groups, we are done.

\section{Appendix I: A few lemmas on non-abelian cohomology}
We include here some basic facts  for the convenience of the reader.
\ms

Given a continuous action
\be
\rho: G\times U\rTo U\ee
of
topological group $G$  on a topological group $U$, we will only need $H^0(G,U)$ and $H^1(G,U)$ in general. Of course $H^0(G,U)=U^{\rho}\subset U$ is the subgroup of $G$-invariant elements. (We will put the homomorphism $\rho$ into the notation or not depending upon the needs of the situation.) Meanwhile,
we define
\be H^1(G,U)=U\bsl Z^1(G,U).\ee
Here, $Z^1(G,U)$ consists of the 1-cocycle, that is, continous maps $c:G\rTo U$ such that
\be c(gh)=c(g)gc(h),\ee
while the $U$ action on it is given by
\be (uc)(g):=uc(g)g(u^{-1}).\ee
We also need $H^2(G,A)$ for $A$ abelian defined in the usual way as the 2-cocycles, that is, continuous functions $c: G\times G\rTo A$ such that
\be gc(h,k)-c(gh, k)+c(g,hk)-c(g,h)=0,\ee
modulo the subgroup of elements of the form
\be df(g,h)=f(gh)-f(g)-gf(h)\ee
for $f:G\rTo A$ continuous.
Any $H^i(G,U)$ defined in this way is pointed by the class of the constant map
$G^n\rTo e\in U$, even though it is a group in general only for $U$ abelian.
We denote by $[c]$ the equivalence class of a cocycle $c$.
\ms

Given a 1-cocycle $c\in Z^1(G,U)$, we can define the twisted action
\be \rho_c: G\rTo \Aut(U)\ee
as
\be \rho_c(g)u=c(g)\rho(g)(u)c(g)^{-1}.\ee
The isomorphism class of this action depends only on the equivalence class $[c]$.
\ms

Given an exact sequence
\be 0\rTo^i A\rTo B\rTo^q C\rTo 0\ee
of topological groups with $G$ action such that the last map admits a continuous splitting (not necessarily a homomorphism)  and $A$ is central in $B$, we get the exact sequence 
\be 0\rTo H^0(G,A) \rTo H^0(G,B)\rTo H^0(G,C)\rTo\ee
\be \rTo H^1(G,A) \rTo H^1(G,B)\rTo H^1(G,C)\rTo H^2(G,A)\ee
of pointed sets, in the sense that the image of one map is exactly the inverse image of the base-point for the next map (\cite{serre2}, Appendix to Chapter VII).
\ms

But there are several bits more of structure. Consider the fibers of the map
\be i_*: H^1(G,A)\rTo H^1(G,B).\ee
The group $H^0(G,C)$ will act on $H^1(G,A)$ as follows. For $c\in H^0(G,C)$, choose a lift to
$b\in B$. For $x\in Z^1(G,A)$, let
$(bx)(g)=bx(g)g(b^{-1}).$ Because $c$ is $G$-invariant, this take values in $A$, and defines a cocycle. Also, a different choice of $b$ will result in an equivalent cocycle, so that the action on $H^1(G,A)$ is well-defined. From the definition, the $H^0(G,C)$-action preserves the fibers of $i_*$. Conversely, if $[x]$ and $[x']$ map to the same element of $H^1(G,B)$, then there is a $b\in B$ such that
$x'(g)=bx(g)g(b^{-1})$ for all $g\in G$. But then by applying $q$, we get
$0=q(b)g(q(b)^{-1}),$ that is, $q(b)\in H^0(G, C).$  We have shown:
\begin{lem}
The fibers of $i_*$ are exactly the $H^0(G,C)$-orbits of $H^1(G, A)$.
\end{lem}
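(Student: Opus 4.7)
The plan is to establish the two set-theoretic inclusions separately, building directly on the formulas introduced in the paragraph preceding the lemma statement. The forward inclusion (every $H^0(G,C)$-orbit lies inside a fiber of $i_*$) reduces to three verifications about the formula $(bx)(g):=bx(g)g(b^{-1})$, where $b\in B$ is chosen to lift a given $c\in H^0(G,C)$ and $x\in Z^1(G,A)$ is a cocycle representative.

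First I would check that $(bx)(g)$ actually lies in $A$: applying $q$ gives $q(b)\,g(q(b)^{-1})=c\,g(c^{-1})=1$ because $c$ is $G$-invariant, so $(bx)(g)\in\ker(q)=A$. Second, the cocycle identity $(bx)(gh)=(bx)(g)\,g((bx)(h))$ is a direct calculation from the cocycle property of $x$, using only that $g(b^{-1})\,g(b)=1$. Third --- and this is where centrality of $A$ in $B$ will be used --- I would verify that the class $[bx]\in H^1(G,A)$ depends neither on the lift $b$ nor on the representative $x$: modifying $b$ to $b\,i(a)$ for $a\in A$ changes $bx$ by an explicit $A$-coboundary, and modifying $x$ within its $H^1(G,A)$-class gives a cohomologous result after pulling central elements past $b$. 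Finally, viewing $bx$ as a $B$-valued cocycle, it is manifestly cohomologous to $x$ via the element $b\in B$, so $i_*[bx]=i_*[x]$ and the action preserves fibers.

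For the reverse inclusion, suppose $[x],[x']\in H^1(G,A)$ with $i_*[x]=i_*[x']$. Then some $b\in B$ satisfies $x'(g)=b\,x(g)\,g(b^{-1})$ for all $g\in G$. Applying $q$ and using $x(g),x'(g)\in A=\ker(q)$ yields $1=q(b)\,g(q(b)^{-1})$ for every $g$, hence $q(b)\in H^0(G,C)$; the class $[x']$ is then by construction the $q(b)$-translate of $[x]$, which completes the argument.

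The main obstacle I anticipate is the third verification in the forward inclusion: keeping the non-abelian left/right cocycle conventions straight while systematically invoking centrality of $A$ in $B$ to show that the action descends to cohomology classes. Everything else is either a direct cocycle computation or a purely formal manipulation of the given exact sequence.
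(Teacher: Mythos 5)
Your proposal reproduces the paper's own argument almost verbatim: the same twisting formula $(bx)(g)=bx(g)g(b^{-1})$ for a lift $b$ of $c\in H^0(G,C)$, the same use of $q$ to check this lands in $A$ and to extract $q(b)\in H^0(G,C)$ in the converse direction, and the same well-definedness checks. You fill in a couple of routine verifications (cocycle identity, independence of the representative $x$ via centrality) that the paper leaves implicit, but the approach is identical.
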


We can say more. Given $x\in Z^1(G, A)$ and $y\in Z^1(G,B)$, consider the map
\be (xy)(g):=x(g)y(g).\ee
This is easily seen to be in $Z^1(G,B)$ and defines an action of $H^1(G,A)$ on $H^1(G,B)$.
Fix an element $[y]\in H^1(G,B)$. We work out its stabilizer.  We have
$ [x][y]=[xy]=[y]$ if and only if there is a $b\in B$ such that $x(g)y(g)=by(g)g(b^{-1}).$
By composing with $q$, we get
\be qy(g)=q(b)qy(g)g(q(b)^{-1})\ee
or
\be
qy(g)g(q(b))qy(g)^{-1}=q(b).\ee
This says that $q(b)$ is invariant under the $G$-action $\rho_{qy}$ given by
\be c \mapsto qy(g)g(c)qy(g)^{-1}.\ee
In fact, it is easy to see that 
\begin{lem} Suppose $C^{\rho_{z}}=0$ for all $[z]\in H^1(G,C)$. Then
$H^1(G,A)$ acts freely on $H^1(G,B)$.
\end{lem}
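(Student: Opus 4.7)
The plan is to establish the lemma by computing the stabilizer of an arbitrary class $[y]\in H^1(G,B)$ under the $H^1(G,A)$-action defined by cocycle multiplication and showing this stabilizer is trivial. The key calculation has already been set up in the paragraph preceding the statement; I would simply carry it through to its conclusion.

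Concretely, suppose $[x]\in H^1(G,A)$ satisfies $[x][y]=[y]$, so that there exists $b\in B$ with $x(g)y(g)=by(g)g(b^{-1})$ for every $g\in G$. Applying the projection $q\colon B\to C$ kills the $A$-valued factor $x(g)$ and yields the relation $qy(g)\,g(q(b))\,qy(g)^{-1}=q(b)$, which is exactly the statement that $q(b)\in C^{\rho_{qy}}$. The hypothesis of the lemma then forces $q(b)=e$, so $b\in A=\ker(q)$.

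Once $b$ is known to lie in $A$, I would invoke centrality of $A$ in $B$ twice: first to commute $b$ past $y(g)$, producing $x(g)y(g)=y(g)\,b\,g(b^{-1})$, and then to observe that $bg(b^{-1})$ lies in $A$ and so commutes past $y(g)^{-1}$ once one solves for $x(g)$. The equation collapses to $x(g)=bg(b^{-1})$, a coboundary in $Z^1(G,A)$, so that $[x]=0$. Since $[y]$ was arbitrary, the stabilizer of every class is trivial, and the action is free.

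The only point requiring genuine care is the transition from the ambient $G$-action on $C$ to the twisted action $\rho_{qy}$: the hypothesis has to be applied to the particular twist determined by the class $[qy]\in H^1(G,C)$ attached to the chosen $[y]$, which is precisely why the lemma quantifies over all $[z]\in H^1(G,C)$ rather than merely demanding $C^{\rho}=0$. Continuity of all the cocycles and of the twisted action is automatic from the framework of Appendix I, so no topological subtleties intervene.
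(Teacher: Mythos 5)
Your proof is correct and follows exactly the paper's route: apply $q$ to the stabilizer relation $x(g)y(g)=by(g)g(b^{-1})$, identify $q(b)$ as a $\rho_{qy}$-invariant of $C$, invoke the hypothesis to force $b\in A$, and then use centrality of $A$ to collapse the relation to $x(g)=bg(b^{-1})$. The only difference is cosmetic (you commute $b$ to the right of $y(g)$ rather than commuting $g(b^{-1})$ to the left, and spell out the coboundary step more explicitly), so this is the same argument.
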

\begin{proof}
From the paragraph before the statement, we get that $q(b)=0$, and hence,
$b\in A$. But then, $x(g)y(g)=bg(b^{-1})y(g)$ for all $g$, from which we deduce that
$x(g)=bg(b^{-1})$ for all $g$, so that $[x]=0$.\end{proof}

On the other hand,
\begin{lem}
The action of $H^1(G,A)$ is transitive on the fibers of $q_*: H^1(G,B)\rTo H^1(G,C).$
\end{lem}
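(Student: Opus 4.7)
The plan is to show that if $[y_1], [y_2] \in H^1(G,B)$ satisfy $q_*[y_1] = q_*[y_2]$, then one can construct an explicit class $[x] \in H^1(G,A)$ with $[x]\cdot[y_1] = [y_2]$. The strategy is in two moves: first normalize the representatives so that $qy_1 = qy_2$ as cocycles (not merely cohomologous), and then take $x := y_2 y_1^{-1}$, which automatically lies in $\ker q = A$. The only place where real content enters is the verification that $x$ is a $1$-cocycle, and this will be forced by the centrality of $A$ in $B$.

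For the first move, I would unpack the hypothesis: by definition $H^1(G,C) = C \backslash Z^1(G,C)$, so $q_*[y_1] = q_*[y_2]$ gives some $c \in C$ with $qy_2(g) = c \cdot qy_1(g) \cdot g(c^{-1})$ for every $g \in G$. Because $q: B \to C$ admits a continuous section (the standing hypothesis of the long exact sequence in 4.2), I lift $c$ to $b \in B$ and replace $y_2$ by the cohomologous cocycle $y_2'(g) := b^{-1} y_2(g)\, g(b)$, which is exactly the $b^{-1}$-translate of $y_2$ under the standing $B$-action on $Z^1(G,B)$. Applying $q$ then gives $qy_2' = qy_1$, so $x(g) := y_2'(g) y_1(g)^{-1}$ takes values in $A$.

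Next I would check $x \in Z^1(G,A)$. Expanding $x(gh)$ with the cocycle identities for $y_2'$ and $y_1$ yields
\begin{equation*}
x(gh) = y_2'(g) \cdot g\bigl(y_2'(h) y_1(h)^{-1}\bigr) \cdot y_1(g)^{-1} = y_2'(g) \cdot g(x(h)) \cdot y_1(g)^{-1}.
\end{equation*}
Since $G$ preserves $A$ inside $B$, one has $g(x(h)) \in A$, and centrality of $A$ in $B$ lets $y_1(g)^{-1}$ commute through it, giving $x(gh) = x(g) \cdot g(x(h))$. Then by the definition of the $H^1(G,A)$-action recalled just before Lemma 4.2, $(x y_1)(g) = x(g) y_1(g) = y_2'(g)$, so $[x]\cdot[y_1] = [y_2'] = [y_2]$, which is transitivity.

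The whole argument is essentially bookkeeping once the key move is identified; the only substantive step is the cocycle verification, where centrality of $A$ is indispensable. No input beyond what already appears in the setup of the long exact sequence and Lemma 4.2 is required, and in particular no hypothesis on $C^{\rho_z}$ (as in Lemma 4.2) is needed here.
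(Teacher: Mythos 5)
Your proof is correct and follows essentially the same route as the paper's: lift $c\in C$ to $b\in B$ via the continuous section, absorb $b$ into a cohomologous representative, observe that the ratio lands in $A=\ker q$, and verify the cocycle identity using centrality of $A$ in $B$. Your version merely splits the paper's one-line formula $y'(g)=x(g)\,by(g)\,g(b^{-1})$ into a normalization step followed by taking a quotient, and spells out the cocycle check that the paper leaves implicit; since $A$ is central, your $x$ is literally the same element of $Z^1(G,A)$.
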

\begin{proof}
The action clearly preserves the fiber. Now suppose
$[qy]=[qy']\in H^1(G,C)$. Then there is a $c\in C$ such that.
\be qy'(g)= c qy(g)g(c^{-1})\ee
for all $g$. We can lift $c$ to $b\in B$, from which we get
\be y'(g)=x(g) by(g)g(b^{-1}) \ee
for some $x(g)\in A$. This equality can be used to show that
\be x:G\rTo A\ee
is a cocycle, and $[y']=[x][y]$.\end{proof}

The existence of the continuous splitting of exact sequences that we need for applying the results above always holds in the profinite case. 
\begin{lem}Suppose we have an exact sequence of profinite groups
\be 0\rTo A\rTo B\rTo^pC\rTo 0\ee
where all maps are continuous. Suppose $B=\invlim_j B_j$, where the $j$ run over natural numbers. Then there is a continuous section to the map $B\rTo C$.
\end{lem}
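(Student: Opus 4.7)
The plan is to construct, by induction on a countable cofinal system of \emph{finite} quotients, a compatible family of set-theoretic sections of the projections on that system, and then assemble them via the inverse limit. Since $B$ is profinite and the given system $\{B_j\}$ is indexed by $\mathbb{N}$, after refinement I may assume each $B_j=B/N_j$ is finite, with $N_1\supset N_2\supset\cdots$ a nested sequence of open normal subgroups of $B$ satisfying $\bigcap_j N_j=1$. Let $A_j$ denote the image of $A$ in $B_j$ and put $C_j:=B_j/A_j$. Because $p:B\to C$ is a continuous surjection between compact Hausdorff groups it is automatically open, so the subgroups $p(N_j)\subset C$ form a neighbourhood basis of $1$, and therefore $C=\invlim_j C_j$; write $q_j:C_{j+1}\to C_j$ and $q'_j:B_{j+1}\to B_j$ for the transition maps, and $p_j:B_j\to C_j$ for the quotients.

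I then construct, by induction on $j$, set maps $s_j:C_j\to B_j$ satisfying $p_j\circ s_j=\mathrm{id}_{C_j}$ and $q'_j\circ s_{j+1}=s_j\circ q_j$. This reduces to the claim that
\[
B_{j+1}\longrightarrow B_j\times_{C_j}C_{j+1}
\]
is surjective, for then one simply picks $s_{j+1}(c')$ to be any preimage of $\bigl(s_j(q_j(c')),\,c'\bigr)$. To verify the surjectivity, take $(b,c')$ with common image $c\in C_j$, lift $c'$ to some $b'_1\in B_{j+1}$, and observe that $b_1:=q'_j(b'_1)\in B_j$ satisfies $b_1b^{-1}\in A_j$; since $A\twoheadrightarrow A_j$ is surjective by the very definition of $A_j$, we may lift $b_1b^{-1}$ to $a\in A$, and then $a^{-1}b'_1\in B_{j+1}$ maps to $b$ in $B_j$ while still projecting to $c'$ in $C_{j+1}$, because $A$ lies in the kernel of $B\to C$.

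Each $C_j$ being finite discrete, every $s_j$ is automatically continuous, and the universal property of $B=\invlim_j B_j$ assembles the compatible family into a continuous set-theoretic section $s:C\to B$ of $p$. The only substantive input is the fibre-product surjectivity at each inductive step, which is purely formal once one notes the tautological surjection $A\twoheadrightarrow A_j$; the countability of the given inverse system is precisely what licenses a plain induction in lieu of a Zorn-type argument.
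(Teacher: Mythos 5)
Your proof is the paper's proof in its essentials: the fibre-product surjectivity $B_{j+1}\rOnto B_j\times_{C_j}C_{j+1}$, verified by lifting the discrepancy $b_1b^{-1}\in A_j$ along $A\rOnto A_j$, is exactly the paper's surjectivity claim, and the inductive construction of compatible sections assembled via the universal property of the limit is the same. The genuine divergence is your opening reduction, where you replace the given tower by a nested chain of \emph{finite} quotients $B/N_j$ with $\bigcap_j N_j=1$. This makes continuity of each $s_j$ automatic, but it is not licensed by the hypotheses: the lemma only asserts $B=\invlim_j B_j$ over $\mathbb{N}$ with the $B_j$ profinite, not finite, and a profinite group that is a countable inverse limit of \emph{infinite} profinite quotients need not be metrizable --- take $B=\prod_{\alpha\in I}\Z/2$ with $I$ uncountable, partition $I$ into countably many infinite blocks, and let $B_j$ be the projection to the first $j$ blocks; then $B$ has no countable neighbourhood basis of open normal subgroups, so there is nothing for your refinement to refine to. The paper's proof works directly with profinite $B_j$, $C_j$; the cost is that continuity of the lifted sections is not free, and the argument implicitly uses the fact that a continuous surjection of profinite spaces admits a continuous section, both for the base section of $B_1\rTo C_1$ and at each inductive lift through $h:B_i\rTo B_j\times_{C_j}C_i$. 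In the paper's actual application, where $B$ is a pro-$M$ completion of a finitely generated group and hence metrizable, your reduction is valid --- the closing remark of the appendix points this out --- so the gap is harmless in context, but as written your argument proves a special case rather than the lemma as stated.
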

\begin{proof}

If $B=\invlim B_j$, by replacing each $B_j$ with the image of $B$ if necessary, we can assume all the maps in the inverse system are surjective. Furthermore, if $A_j$ is the image of $A$ in $B_j$,  and $C_j=B_j/A_j$, one gets
$A=\invlim A_j$ (since $A$ is closed in $B$) and $C=\invlim C_j$.  That is, the exact sequence of profinite groups can be constructed as an inverse limit of  exact sequences
\be 0\rTo A_j\rTo B_j\rTo^{p_j} C_j\rTo 0\ee
indexed by the same category in such as way that all the transition maps \be A_{i}\rTo A_j, \ B_{i}\rTo B_j,  \ C_{i}\rTo C_j\ee
are surjective.
From the commutative diagram
\be \bd B_{i}& \rTo & C_{i}\\
\dTo^f & & \dTo^g \\
B_{j}& \rTo & C_j\ed \ee
we get the commutative diagram
\be \bd B_{i}& \rTo^h & B_j\times_{C_j} C_{i}\\
 &\rdTo & \dTo\\
 & & C_{i} \ed\ee
We claim that the map $h$ is surjective.
 To see this, let $c_j\in C_j$ and  $b_j\in B_j$ map to $c_j$. We need to check that $f^{-1}(b_j)$ surjects onto $g^{-1}(c_j)$. Let $c_{i}\in g^{-1}(c_j)$. Choose $b'_{i}\in B_{i}$ mapping to $c_{i}$ and let $b_j'=f(b'_{i})$. Since $b_j'$ and $b_j$ both map to $c_j$, there is an $a_j\in A_j$ such that $b_j'=b_j+a_j$. Now choose $a_{i}$ mapping to $a_j$ and put 
 $b_{i}=b_{i}'-a_{i}$. Then $b_{i}\in f^{-1}(b_j)$ and it still maps to $c_{i}$.
This proves the claim.

 For any fixed $j$, suppose we've chosen a section $s_j$ of $B_j\rTo C_j$. Then \be s_j\circ g:C_{i}\rTo B_j\ee defines a section of
 \be B_j\times_{C_j} C_{i}\rTo C_{i}.\ee
 This section can then be lifted to a section $s_{i}$ of $B_{i}\rTo C_{i}$. Thereby, we have constructed  a diagram of sections
 \be \bd C_{i}& \rTo^{s_{i}}& B_{i} \\
 \dTo & & \dTo \\
 C_{j}& \rTo^{s_j}& B_{j} \ed \ee
 
By composing $s_j$ with the projection $g_j: C\rTo C_j$, we have a compatible sequence of maps
\be C\rTo^{f_j=s_j\circ g_j} B_j\ee
such that $p_j\circ f_j=g_j$. Thus, we get a continuous map $f:C\rTo B$ such that $p\circ f=Id$.

\end{proof}

Clearly, a continuous section must exist in circumstances more general than countably ordered inverse limits, but we have just recalled this case since it is all we will need. This applies for example when $B$ is the pro-$M$ completion of a finitely-generated group: For every $n$, we can let $B(n)\subset B$ be the intersection of open subgroups of index $\leq n$. This is a characteristic subgroup, and still open. So the quotients defining the inverse limit can be taken as
$B/B(n)$.

\section{Appendix II: Some complements on duality for Galois cohomology}

When $A$ is  topological abelian group, $A^{\vee}$ denotes the continuous homomorphisms to the discrete group $\Q/\Z$. Thus, in the profinite case of $A=\invlim A_i$,
\be A^{\vee}=\dirlim_i \Hom(A_i,\Q/\Z)\ee
with the discrete topology.  If $A=\dirlim_m A[m]$ is a discrete torsion abelian group, then
\be A^{\vee}=\invlim \Hom (A[m], \Q/\Z)\ee
with the projective limit topology.
Meanwhile, if $A$ has a continuous action of the Galois group of a local or a global field, then
$D(A)$ denotes the continuous homomorphisms to the discrete group \be \mu_{\infty}=\dirlim_m \mu_m\ee
with Galois action. As far as the topological group structure is concerned, $D(A)$ is of course the same as $A^{\vee}$.

We let $F$ be a number field and $T$ a finite set of places of $F$ including the Archimedean places. We denote by $G_F$ the Galois group $\Gal(\bF/F)$ and by $G^T_F=\Gal(F_T/F)$ the Galois group of the maximal extension of $F$ unramified outside $T$. Let $v$ be a place of $F$ and $G_v=\Gal(\bF_v/F_v)$ equipped with a choice of homomorphism $G_v\rTo G_F\rTo  G^T_F$ given by the choice of an embedding $\bF\rInto \bF_v.$

In the following $A$ (with or without Galois action) will be in the abelian sub-category of all abelian groups generated by topologically finitely-generated pro-finite abelian groups and torsion groups $A$ such that   $A^{\vee}$ is topologically finitely-generated.

We have local Tate duality
\be H^i(G_v, A)\simeq^{D} H^{2-i}(G_v, D(A))^{\vee}.\ee
We also use the same letter $D$ to denote
 the product isomorphisms
\be \prod_{v\in T'} H^i(G_v, A)\simeq^{D} \prod_{v\in T'}H^{2-i}(G_v, D(A))^{\vee}\ee 
for any indexing set $T'$.

Let $\Sha^i_T(A)$ be the kernel of the localization map
\be \Sha^i_T(A):=\Ker[H^i(G^T_F, A)\rTo^{\loc_T} \prod_{v\in T}H^i(G_v, A)]\ee
and $Im^i_T(A)$, the image of the localization map
\be Im^i_T(A):=Im [H^i(G^T_F, A)\rTo^{\loc_T} \prod_{v\in T}H^i(G_v, A)].\ee
Assume now that $A=\invlim A_n$, where for all $n$, all primes dividing the order of $A_n$ lie below $T$. According to Poitou-Tate duality, we have an isomorphism
\be \Sha^i_T(A)\simeq \Sha^{2-i}_T(D(A))^{\vee},\ee
and an exact sequence
\be H^i(G^T_F, A)\rTo \prod_{v\in T} H^i(G_v, A)\rTo^{\loc_T^*\circ D} H^{2-i}(G^T_F , D(A))^{\vee}.\ee
Note that this is usually stated for finite coefficients. But since all the groups in the exact sequence
\be H^i(G^T_F, A_n)\rTo \prod_{v\in T} H^i(G_v, A_n)\rTo^{\loc_T^*\circ D} H^{2-i}(G^T_F, D(A_n))^{\vee}\ee 
 are finite, we can take an inverse limit to get the exact sequence above.
 
 If $T'\supset T$,  since all the inertia subgroups $I_v\subset G_v$ for $v\notin T$ act trivially on $A$, we have 
\be Im^1_{T'}(A)\cap [\prod_{v\in T}H^1(G_v, A)\times \prod_{v\in T'\setminus T}H^1(G_v/I_v,A)]=Im^1_T(A).\ee
 In particular, we have an exact sequence
 \be H^1(G^T_F,A)\rTo^{\loc_{T'}} \prod_{v\in T}H^1(G_v, A)\times \prod_{v\in T'\setminus T}H^1(G_v/I_v,A)\rTo^{\loc_{T'}^*\circ D} H^1(G^{T'}_F, D(A))^{\vee}.\ee
 Taking an inverse limit over $T'$, we get an exact sequence
 \be H^1(G^T_F,A)\rTo^{\loc} \prod_{v\in T} H^1(G_v, A)\times \prod_{v\notin T} H^1(G_v/I_v, A)\rTo^{\loc^*\circ D} H^1(G_F, D(A))^{\vee}.\ee 
\ms

\ms

{\bf Acknowledgements} It is a great pleasure and an honour to dedicate this humble paper to John Coates in gratitude for all his kind guidance and support over the years. I am also grateful to Henri Darmon, Jonathan Pridham, Romyar Sharifi, Junecue Suh, and Andrew Wiles for many helpful discussions on reciprocity laws.

\ms

 {\footnotesize Mathematical Institute, University of Oxford, Woodstock Road, Oxford, OX2 6GG, and Department of Mathematical Sciences, Seoul National University, 1 Gwanak-ro Gwanak-gu, Seoul 151-749, Korea. email: minhyong.kim@maths.ox.ac.uk}

\begin{thebibliography}{20}
\bibitem{artin-tate}
Artin, Emil; Tate, John Class field theory. Reprinted with corrections from the 1967 original. AMS Chelsea Publishing, Providence, RI, 2009. viii+194 pp.
\bibitem{BDKW}Balakrishnan, Jennifer; Dan-Cohen, Ishai; Kim, Minhyong; Wewers, Stefan A non-abelian conjecture of Birch and Swinnerton-Dyer type for hyperbolic curves. arXiv 1209.0640
\bibitem{SGA1}
Grothendieck, Alexandre  S\'eminaire de G\'eom\'etrie Alg\'ebrique du Bois Marie - 1960-61 - Rev\^etements \'etales et groupe fondamental --(SGA 1) (Lecture notes in mathematics 224) . Berlin; New York: Springer-Verlag. xxii+447.
\bibitem{jannsen}
Jannsen, Uwe On the $l$-adic cohomology of varieties over number fields and its Galois cohomology. Galois groups over Q (Berkeley, CA, 1987), 315--360, Math. Sci. Res. Inst. Publ., 16, Springer, New York, 1989. 
\bibitem{kim}
 Kim, Minhyong The motivic fundamental group of $\mathbb{P}^1\setminus \{0,1,\infty\}$ and the theorem of Siegel. Invent. Math. 161 (2005), no. 3, 629-656. 
 \bibitem{kim2}  Kim, Minhyong The unipotent Albanese map and Selmer varieties for curves. Publ. Res. Inst. Math. Sci. 45 (2009), no. 1, 89--133. 
 \bibitem{KT}  Kim, Minhyong; Tamagawa, Akio The $\ell$-component of the unipotent Albanese map. Math. Ann. 340 (2008), no. 1, 223--235. 
\bibitem{manin}
Manin, Y. I.
Le groupe de Brauer-Grothendieck en g\'eom\'etrie diophantienne. Actes du Congr\`es International des Math\'ematiciens (Nice, 1970), Tome 1, pp. 401--411. Gauthier-Villars, Paris, 1971. 
\bibitem{NSW}
 Neukirch, Juergen; Schmidt, Alexander; Wingberg, Kay Cohomology of number fields. Second edition. Grundlehren der Mathematischen Wissenschaften, 323. Springer-Verlag, Berlin, 2008. xvi+825 pp. 
 \bibitem{serre}
 
 
  Serre, J.-P. A course in arithmetic. Translated from the French. Graduate Texts in Mathematics, No. 7. Springer-Verlag, New York-Heidelberg, 1973. viii+115 pp. 
\bibitem{serre2}
 Serre, Jean-Pierre Local fields. Translated from the French by Marvin Jay Greenberg. Graduate Texts in Mathematics, 67. Springer-Verlag, New York-Berlin, 1979. viii+241 pp. 
 
 
 
 \bibitem{wojtkowiak} Wojtkowiak, Zdzislaw On the Galois actions on torsors of paths. I. Descent of Galois representations. J. Math. Sci. Univ. Tokyo 14 (2007), no. 2, 177--259. 
 \end{thebibliography}
\end{document}